\providecommand{\U}[1]{\protect\rule{.1in}{.1in}}
\newtheorem{theorem}{Theorem}
\begin{document}
\title[On the existence of foliations]{On the existence of foliations by solutions to the exterior Dirichlet problem
for the minimal surface equation}
\author{Ari Aiolfi \ \ \ \ Daniel Bustos \ \ \ \ Jaime Ripoll }
\date{}
\subjclass{53A10, 53C42, 49Q05, 49Q20}
\keywords{Exterior Dirichlet problem; Minimal surface equation; Minimal hypersurfaces foliations}
\maketitle

\begin{abstract}
Given an exterior domain $\Omega$ with $C^{2,\alpha}$ boundary in
$\mathbb{R}^{n}$, $n\geq3$, we obtain a $1$-parameter family $u_{\gamma}\in
C^{\infty}\left(  \Omega\right)  $, $\left\vert \gamma\right\vert \leq\pi/2$,
of solutions of the minimal surface equation such that, if $\left\vert
\gamma\right\vert <\pi/2$, $u_{\gamma}\in C^{\infty}\left(  \Omega\right)
\cap C^{2,\alpha}\left(  \overline{\Omega}\right)  $, $u_{\gamma}%
|_{\partial\Omega}=0$ with $\max_{\partial\Omega}\left\Vert \nabla u_{\gamma
}\right\Vert =\tan\gamma$ and, if $\left\vert \gamma\right\vert =\pi/2$, the
graph of $u_{\gamma}$ is contained in a $C^{1,1}$ manifold $M_{\gamma}%
\subset\overline{\Omega}\times\mathbb{R}$ with $\partial M_{\gamma}%
=\partial\Omega$. Each of these functions is bounded and asymptotic to a
constant%
\[
c_{\gamma}=\lim_{\left\Vert x\right\Vert \rightarrow\infty}u_{\gamma}\left(
x\right)  .
\]
The mappings $\gamma\rightarrow u_{\gamma}\left(  x\right)  $ (for fixed
$x\in\Omega$) and $\gamma\rightarrow c_{\gamma}$ are strictly increasing and
bounded. The graphs of these functions foliate the open subset of
$\mathbb{R}^{n+1}$%
\[
\left\{  \left(  x,z\right)  \in\Omega\times\mathbb{R}\text{, }-u_{\pi
/2}\left(  x\right)  <z<u_{\pi/2}\left(  x\right)  \right\}  .
\]
Moreover, if $\mathbb{R}^{n}\backslash\Omega$ satisfies the interior sphere
condition of maximal radius $\rho$ and if $\partial\Omega$ is contained in a
ball of minimal radius $\varrho$, then%
\[
\left[  0,\sigma_{n}\rho\right]  \subset\left[  0,c_{\pi/2}\right]
\subset\left[  0,\sigma_{n}\varrho\right]  ,
\]
where
\[
\sigma_{n}=\int_{1}^{\infty}\frac{dt}{\sqrt{t^{2\left(  n-1\right)  }-1}}.
\]
One of the above inclusions is an equality if and only if $\rho=\varrho$,
$\Omega$ is the exterior of a ball of radius $\rho$ and the solutions are radial.

These foliations were studied by E. Kuwert in \cite{K} and our result answers
a natural question about the existence of such foliations which was not
touched in \cite{K}.

\end{abstract}

\section{Introduction}

\qquad The exterior Dirichlet problem (EDP)\ for the minimal surface equation
consists on the study of existence/nonexistence and uniqueness of solutions of
the PDE boundary problem%
\begin{equation}
\left\{
\begin{array}
[c]{l}%
\mathcal{M}\left(  u\right)  :=\operatorname{div}\left(  \frac{\nabla u}%
{\sqrt{1+\left\Vert \nabla u\right\Vert ^{2}}}\right)  =0\text{, }u\in
C^{2}\left(  \Omega\right)  \cap C^{0}\left(  \overline{\Omega}\right) \\
u|_{\partial\Omega}=\varphi
\end{array}
\right.  . \label{exDP}%
\end{equation}
where $\Omega\subset\mathbb{R}^{n}$, $n\geq2$, is an exterior domain that is,
$\Lambda:=\mathbb{R}^{n}\backslash\overline{\Omega}$ is a relatively compact
domain, and $\varphi\in C^{0}\left(  \partial\Omega\right)  $ a given
function. Additionally to existence or not of solutions of (\ref{exDP}), one
is also interested on global properties of their graphs in $\mathbb{R}^{n+1}.$

In $\mathbb{R}^{2}$ the EDP has a history which goes back to J. C. C. Nitsche
who proved (Section 4 of \cite{N}) that any solution of (\ref{exDP}) has a
$C^{1}$ expansion,$\ $for $\left\Vert x\right\Vert $ big enough, of the form%
\begin{equation}
u\left(  x_{1},x_{2}\right)  =c_{1}x_{1}+c_{2}x_{2}+c\log\left\Vert
x\right\Vert +O\left(  \left\Vert x\right\Vert ^{-1}\right)  . \label{expan}%
\end{equation}

Regarding the existence/non existence problem, R. Osserman \cite{O} proved
that there is a boundary data on the disk for which the EDP (\ref{exDP}) on
the complement of the disk has no bounded solution. R. Krust \cite{Kr} proved
that Osserman's boundary data has no solution with horizontal end, that is,
$c_{1}=c_{2}=0$ in (\ref{expan}) or, equivalently, having vertical Gauss map
at infinity, leaving opened the question about the existence or not of a
boundary data for which the EDP has no solution at all that is, with no end
type restriction. This was solved by N. Kutev and F. Tomi \cite{KT} who proved
the existence of a boundary data, with arbitrarily small oscillation and with
bounded $C^{0,1}$ norm, for which (\ref{exDP}) has no solution, irrespective
of the asymptotic behavior. As to the existence problem, it is proved in
\cite{KT} and \cite{RT} that (\ref{exDP}) has\ a solution with horizontal end
under conditions involving the curvature of the boundary of the domain, the
Lipschitz constant and the oscillation of the boundary data.

Regarding the behavior in $\mathbb{R}^{n+1},$ $n\geq2,$ of the graphs of the
solutions of (\ref{exDP}), we remark that the fundamental solutions (see next
section) on the exterior of any given open ball $B$ of $\mathbb{R}^{n}$,
provide examples of foliations with horizontal ends of the open subset of
$\mathbb{R}^{n}$%
\[
\left\{  \left(  x,z\right)  \in\mathbb{R}^{n}\backslash\overline{B}%
\times\mathbb{R}\text{ such that }-v\left(  x\right)  <z<v\left(  x\right)
\right\}  ,
\]
where the graph of $v$ is the top of a generalized catenoid with neck size
determined by $B.$ This foliation is parametrized by the angle that the Gauss
map of the graph of the fundamental solution at the boundary of the domain
makes with the positive vertical axis (note that if $\gamma$ is such angle
relatively to a fundamental solution $u\in C^{2}\left(  \mathbb{R}%
^{n}\backslash B\right)  $, then $\tan\gamma=\sup_{\partial B}\left\Vert
\nabla u\right\Vert $). A question that arises is if such a similar phenomenon
happens with an arbitrary exterior domain.

This question was partially answered by the third author in $\mathbb{R}^{2}$
(Theorem 1 of \cite{R})$.$ A complete answer in the two dimensional case was
obtained in \cite{RT} where the authors prove that the limit of the leaves in
Theorem 1 of \cite{R} can be included in the foliation.

We recall that R. Krust proved in \cite{Kr} that if there are two different
solutions in $\mathbb{R}^{3}$ with the same Gauss map at infinity then there
is a continuum of solutions foliating the space in between$.$

The case $\mathbb{R}^{n}$ for $n\geq3,$ to the authors' knowledge, was
investigated only in the work of E. Kuwert \cite{K} where it is proved that
the Krust foliation theorem \cite{Kr} is true in any dimension, leaving opened
however the problem of existence or not of such foliations.

In the present paper we investigate the existence of foliations to the EDP in
$\mathbb{R}^{n}$ for $n\geq3$ in arbitrary exterior domains of $\mathbb{R}%
^{n}$ but in the special case that the boundary data $\varphi$ in (\ref{exDP})
is zero. We use in part the technique of \cite{R} for proving that an exterior
domain $\Omega$ of $C^{2,\alpha}$ class in $\mathbb{R}^{n},$ $n\geq3,$
determines a non trivial foliation of minimal hypersurfaces in $\Omega\times\mathbb{R\subset R}^{n+1}$ containing the trivial solution as a
leaf. As it happens in the $2-$dimensional case, this foliation has horizontal
ends and is parametrized by the maximal angle that the Gauss map of the leaves
in $\mathbb{R}^{n+1}$ make with the positive vertical axis at $\partial
\Omega.$ Moreover, any leaf has a limit height at infinity which can be
estimated by the geometry of the domain (see Theorem 1 for a precise statement).

A natural problem is to extend our result to more general boundary data. To
succeed, applying the technique used here (or of \cite{RT}), one needs to
guarantee the existence of at least one solution with the given boundary data.
However, although not having a counter example, we do believe, as it happens
in the $2-$dimensional case, that without hypothesis on the boundary data such
a solution may not exist. And even if one solution exists, it can possibly be
the only one. This happens in the $2-$dimensional case on the exterior of a
disk for certain boundary data, as proved in Theorem 2.9 of \cite{RT}. Even
though, it seems to us that a more difficult part on the nonzero boundary data
case is to estimate the values at infinity of the solutions: as done here, one
needs the fundamental solutions as barriers and the way they are used applies,
in principle, only for zero boundary data.

\section{Fundamental solutions}

Given $\lambda>0$ and $p\in\mathbb{R}^{n}$ let $B_{\lambda}\left(  p\right)  $
be the ball centered at $p$ and with radius $\lambda,$ $n\geq2$. The radial
function
\begin{equation}
v_{\lambda}\left(  x\right)  =\lambda\int_{1}^{\frac{r}{\lambda}}\frac
{dt}{\sqrt{t^{2\left(  n-1\right)  }-1}}\text{, }r=\left\Vert x-p\right\Vert
\text{, }x\in\mathbb{R}^{n}\backslash B_{\lambda}\left(  p\right)  ,
\label{ncat}%
\end{equation}
is a solution of (\ref{exDP}) in $\mathbb{R}^{n}\backslash B_{\lambda}\left(
p\right)  $ vanishing at $\partial B_{\lambda}\left(  p\right)  .$ We call
$v_{\lambda}$, or any vertical translation of $v_{\lambda},$ a fundamental
solution. The graph of $v_{\lambda}$ is half of a $n-$dimensional catenoid. By
using isometries and homotheties one obtains a family of radial solutions,
which we also call fundamental solutions, defined in the exterior of any fixed
ball which gradient at the boundary of the ball varies from $0$ to $\infty.$

In this paper we are interested only when $n\geq3.$ In this case we then have%
\begin{equation}
0<\sigma_{n}:=\int_{1}^{\infty}\frac{dt}{\sqrt{t^{2\left(  n-1\right)  }-1}%
}<\infty\label{sig}%
\end{equation}
so that, from (\ref{ncat}), $v_{\lambda}\left(  x\right)  $ has a limit as
$\left\Vert x\right\Vert \rightarrow\infty$ not depending on $p,$ which we
denote by $v_{\lambda}\left(  \infty\right)  $ and which is given by
\begin{equation}
v_{\lambda}\left(  \infty\right)  =\sigma_{n}\lambda. \label{vlinf}%
\end{equation}

\begin{figure}[h]
\centering
\includegraphics[width=1\linewidth]{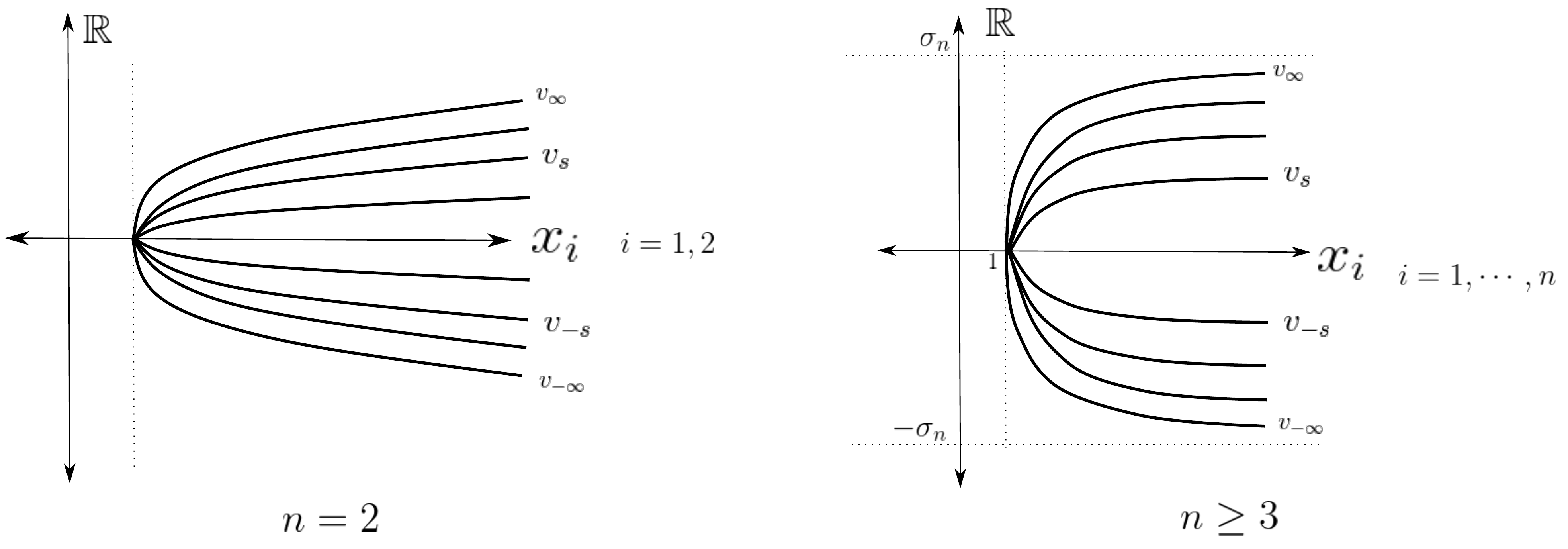} 
\caption*{Fundamental Solutions with $\lambda=1$}
\label{fig:fig1}%
\end{figure}

\section{The result and its proof}

A fundamental tool in PDE, used several times in the proof of Theorem
\ref{mt}, is the comparison principle. In our case it states that if $\Omega$
is a bounded domain in $\mathbb{R}^{n}$, $u,v\in C^{2}\left(  \Omega\right)  $
satisfies $\mathcal{M}\left(  u\right)  =\mathcal{M}\left(  v\right)  =0$ and
$u\leq v$ at $\partial\Omega$ that is,%
\[
\lim\sup_{k}\left(  u(x_{k})-v(x_{k})\right)  \leq0
\]
for any sequence $x_{k}$ in $\Omega$ which leaves any compact subset of
$\Omega,$ then $u\leq v$ in $\Omega$ (Proposition 3.1 of \cite{RT2})$.$ An
easy consequence of the comparison principle is the maximum principle which
asserts that if $u,v\in C^{2}\left(  \Omega\right)  \cap C^{0}\left(
\overline{\Omega}\right)  $ satisfy $\mathcal{M}\left(  u\right)
=\mathcal{M}\left(  v\right)  =0$ in $\Omega$ then
\[
\max_{\overline{\Omega}}\left\vert u-v\right\vert =\max_{\partial\Omega
}\left\vert u-v\right\vert
\]
(Proposition 3.2 of \cite{RT2})$.$

The maximum principle has an useful application on Differential Geometry,
known as the tangency principle. In our case it says that if $M_{1}$ and
$M_{2}$ are minimal hypersurfaces of $\mathbb{R}^{n}$ (with or without
boundary and not necessarily graphs) that have a tangency at some interior or
boundary point $p\in M_{1}\cap M_{2},$ and if $M_{1}$ is in one side of
$M_{2}$ in a neighborhood $p,$ then $M_{1}$ coincides with $M_{2}$ in a
neighborhood of $p$ \cite{FS}.

We also remark that once we have a priori $C^{1}$ estimates for the solutions
of the minimal surface equation (or to more general quasi linear elliptic
PDEs) we also have $C^{1,\alpha}$ a priori estimates from the H\"{o}lder
theory (Ch 13 of \cite{GT}). Then well known arguments (see, for example,
Section 2.1 of \cite{RT2}) allow to reduce the $C^{2,\alpha}$ a priori
estimates and the regularity of solutions of quasi linear elliptic PDEs to a
priori estimates and regularity theory of linear elliptic PDEs (Ch 6 of
\cite{GT}).

In the statement of Theorem \ref{mt} we set, for convenience, $s:=\tan\gamma$,
$\left\vert \gamma\right\vert \leq\pi/2$ and, we use $u_{s}$ and $u_{s}\left(
\infty\right)  $ instead of $u_{\gamma}$ and $c_{\gamma}$ as in the Abstract.

\begin{theorem}
\label{mt}Assume that $\Omega$ is an exterior domain of $C^{2,\alpha}$ class
such that $\Lambda:=\mathbb{R}^{n}\backslash\overline{\Omega},$ $n\geq3,$
satisfies the interior sphere condition with maximal radius $\rho$, namely:
Given $p\in\partial\Lambda$, there is a $\left(  n-1\right)  -$dimensional
sphere $S_{p}$ of radius $\rho$ such that $p\in S_{p}$, $S_{p}\subset
\overline{\Lambda}$ and $\rho$ is maximal under these conditions. Let
$\varrho$ be the radius of the smallest open ball $B_{\varrho}$ of
$\mathbb{R}^{n}$ such that $\partial\Omega\subset\overline{B}_{\varrho}.$
Given $s\in\left[  -\infty,\infty\right]  $ there is a bounded function
$u_{s}\in C^{\infty}\left(  \Omega\right)  $ satisfying $\mathcal{M}\left(
u_{s}\right)  =0$ in $\Omega,$ $u_{-s}=-u_{s}$ and such that:

If $-\infty<s<\infty$ then $u_{s}\in C^{\infty}\left(  \Omega\right)  \cap
C^{2,\alpha}\left(  \overline{\Omega}\right)  ,$
\begin{equation}
u_{s}|_{\partial\Omega}=0 \label{bou}%
\end{equation}
and%
\begin{equation}
\max_{\partial\Omega}\left\Vert \nabla u_{s}\right\Vert =\max_{\Omega
}\left\Vert \nabla u_{s}\right\Vert =\left\vert s\right\vert . \label{gr}%
\end{equation}

The graph of $u_{\infty}$ is contained in a $C^{1,1}$-manifold $M\subset
\overline{\Omega}\times\mathbb{R}$ with boundary $\partial M=$ $\partial
\Omega.$

For any $s\in\left[  -\infty,\infty\right]  $ there exists the limit
\begin{equation}
u_{s}\left(  \infty\right)  :=\lim_{\left\Vert x\right\Vert \rightarrow\infty
}u_{s}\left(  x\right)  , \label{cinf}%
\end{equation}
and%
\begin{equation}
\lim_{\left\Vert x\right\Vert \rightarrow\infty}\left\Vert \nabla u_{s}\left(
x\right)  \right\Vert =0. \label{gauss}%
\end{equation}

Moreover, the maps $s\mapsto u_{s}\left(  x\right)  $, for fixed $x\in\Omega$
and $s\mapsto u_{s}\left(  \infty\right)  $ are strictly increasing, bounded
and we have the inclusions
\begin{equation}
\left[  -\sigma_{n}\rho,\sigma_{n}\rho\right]  \subset\left[  -u_{\infty
}\left(  \infty\right)  ,u_{\infty}\left(  \infty\right)  \right]
\subset\left[  -\sigma_{n}\varrho,\sigma_{n}\varrho\right]  \label{inc}%
\end{equation}
where $\sigma_{n}$ is given by (\ref{sig}). If one of the inclusions is an
equality then $\rho=\varrho$, $\Omega$ is the exterior of a ball of radius
$\rho$ and the $u_{s}$ are the fundamental solutions.

Finally, the graphs of the solutions of $u_{s},$ $s\in\left(  -\infty
,\infty\right)  $ foliate the open subset of $\mathbb{R}^{n+1}$
\begin{equation}
O:=\left\{  \left(  x,z\right)  \in\mathbb{R}^{n+1}\backslash\overline{\Omega
}\times\mathbb{R}\text{ such that }u_{-\infty}\left(  x\right)  <z<u_{\infty
}\left(  x\right)  \right\}  . \label{O}%
\end{equation}

\end{theorem}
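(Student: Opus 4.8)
The plan is to produce the leaves of finite boundary gradient first, by an exhaustion of $\Omega$, and to recover the two extremal leaves $u_{\pm\infty}$ as monotone limits. By the symmetry $u_{-s}=-u_{s}$ we may work with $s\geq0$, and it is convenient to parametrize the family first by its value at infinity. Fix $R$ large with $\overline{\Lambda}\subset B_{R}(O)$ and, for $a\geq0$, solve $\mathcal{M}(u)=0$ in $\Omega_{R}:=\Omega\cap B_{R}(O)$, $u|_{\partial\Omega}=0$, $u|_{\partial B_{R}}=a$. The outer sphere is mean convex as part of $\partial\Omega_{R}$, so the only possible obstruction comes from $\partial\Omega$, which is the ``concave'' part forced by the interior sphere condition on $\Lambda$; there the constant data together with the lower barrier $0$ and an upper barrier given by a fundamental solution placed over a ball inscribed in $\Lambda$ and touching $\partial\Omega$ at the given point (an exact, vertical-tangent barrier) give the boundary gradient estimate, so solvability follows by the continuity method and the $C^{1,\alpha}$--$C^{2,\alpha}$ reduction recalled above, for $a$ below a threshold depending on $R$. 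Comparison with the two barriers gives $0\leq u_{R}^{a}\leq V$ for a fixed fundamental solution $V$ translated upward, so the family is locally uniformly bounded; letting $R\to\infty$ along a subsequence and using interior $C^{2,\alpha}$ estimates yields $u^{a}\in C^{\infty}(\Omega)\cap C^{2,\alpha}(\overline{\Omega})$ solving $\mathcal{M}(u^{a})=0$, $u^{a}|_{\partial\Omega}=0$.

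For the behaviour at infinity I would squeeze $u^{a}$ between two fundamental solutions with the same limit at infinity, forcing $u^{a}$ to be bounded; interior gradient estimates on balls $B_{r}(x)$ with $\|x\|\to\infty$, on which $\operatorname{osc}u^{a}\to0$, then give both the existence of $u^{a}(\infty):=\lim_{\|x\|\to\infty}u^{a}(x)$ and $\|\nabla u^{a}(x)\|\to0$, and a comparison argument identifies $u^{a}(\infty)=a$. Since the vertical component $W^{-1}=(1+\|\nabla u^{a}\|^{2})^{-1/2}$ of the Gauss map of the minimal graph obeys the maximum principle and tends to $1$ at infinity, its minimum is attained on $\partial\Omega$, which is (\ref{gr}) after relabelling. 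The comparison principle makes $a\mapsto u^{a}(x)$ nondecreasing for fixed $x$, strict by the strong maximum principle; a Krust-type argument via the boundary tangency principle shows the boundary gradient $g(a):=\max_{\partial\Omega}\|\nabla u^{a}\|$ is continuous and strictly increasing with $g(0)=0$, and one checks $g(a)\to\infty$ as $a$ increases to its supremum $a_{\max}$ (otherwise maximality is contradicted by an openness/implicit function argument). Setting $u_{s}:=u^{a(s)}$ with $g(a(s))=s$ gives the family for finite $s\geq0$, and $u_{-s}:=-u_{s}$.

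Next, $u_{\infty}:=\lim_{a\nearrow a_{\max}}u^{a}$ is a monotone increasing limit of solutions bounded above by $V$, hence a solution in $\Omega$ with $u_{\infty}(\infty)=a_{\max}$ and $\|\nabla u_{\infty}\|\to0$ at infinity. As $a\nearrow a_{\max}$ the boundary gradients blow up, so the graphs become vertical along $\partial\Omega$; writing the surface near $p\in\partial\Omega$ as a graph over a piece of the cylinder $\partial\Omega\times\mathbb{R}$ rather than over $\Omega$ turns it into a uniformly elliptic problem with bounded gradient, giving $C^{1,\alpha}$ regularity of the closure $M$ of $\mathrm{graph}(u_{\infty})$ up to $\partial M=\partial\Omega$; the extra $C^{1,1}$ regularity at $\partial M$ comes from matching $M$ with its mirror image $-M$ across $\partial\Omega\times\mathbb{R}$ into a $C^{1}$, hence interiorly smooth, minimal hypersurface, which is $C^{1,1}$ across the gluing locus. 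The foliation statement is then a corollary of the rest: for fixed $x\in\Omega$ the map $s\mapsto u_{s}(x)$ is continuous (including at $s=\pm\infty$, by the monotone-limit construction) and strictly increasing on $[-\infty,\infty]$ with image $[-u_{\infty}(x),u_{\infty}(x)]$, so exactly one leaf $\mathrm{graph}(u_{s})$, $s\in(-\infty,\infty)$, passes through each point of $O$; disjointness is strict monotonicity, and smoothness of the foliation follows because the leaves are disjoint real-analytic minimal hypersurfaces depending smoothly on $s$ (elliptic regularity for the linearized equation).

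The estimate (\ref{inc}) and its rigidity are, I expect, the main obstacle. For the left inclusion, suppose $u_{\infty}(\infty)<\sigma_{n}\rho$, fix $p\in\partial\Omega$ and a maximal inscribed ball $B_{\rho}(q_{p})$ with $\overline{B_{\rho}(q_{p})}\subset\overline{\Lambda}$, $p\in\partial B_{\rho}(q_{p})$, tangent to $\partial\Omega$ at $p$, and let $v_{\rho}$ be the fundamental solution centred at $q_{p}$ on $\mathbb{R}^{n}\backslash B_{\rho}(q_{p})\supset\Omega$, so $v_{\rho}\geq0$ on $\partial\Omega$, $v_{\rho}(p)=0$, $v_{\rho}(\infty)=\sigma_{n}\rho$. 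Under this assumption $\limsup(u_{\infty}-v_{\rho})\leq0$ along every sequence leaving the compacts of $\Omega$, so the comparison principle gives $u_{\infty}\leq v_{\rho}$ in $\Omega$; but $u_{\infty}-v_{\rho}\to0$ at $p$, where the graphs of $u_{\infty}$ and $v_{\rho}$ are minimal hypersurfaces tangent at $(p,0)$ with common vertical tangent plane and $u_{\infty}\leq v_{\rho}$, so the boundary tangency principle forces them to coincide near $(p,0)$, hence $\partial\Omega=\partial B_{\rho}(q_{p})$ there; unique continuation makes $\Lambda$ a ball of radius $\rho$ with $u_{\infty}=v_{\rho}$, impossible unless $\Omega$ is already that exterior ball, in which case $u_{\infty}(\infty)=\sigma_{n}\rho$. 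Running the same argument with $u_{\infty}(\infty)=\sigma_{n}\rho$ shows equality forces the ball. The right inclusion is symmetric, at a contact point $p^{*}\in\partial\Omega\cap\partial B_{\varrho}$ (nonempty by minimality of $B_{\varrho}$), using $\mathbb{R}^{n}\backslash\overline{B_{\varrho}}\subset\Omega$, the tangency $\partial B_{\varrho}$ to $\partial\Omega$ at $p^{*}$, and the fundamental solution $v_{\varrho}$ centred at the centre of $B_{\varrho}$: if $u_{\infty}(\infty)>\sigma_{n}\varrho$ the comparison principle on $\mathbb{R}^{n}\backslash\overline{B_{\varrho}}$ gives $v_{\varrho}\leq u_{\infty}$ there while the graphs touch tangentially at $(p^{*},0)$, so the tangency principle again forces the ball case, and the finite leaves satisfy $u_{s}(\infty)<\sigma_{n}\varrho$ for the analogous reason (their boundary tangent planes at $p^{*}$ are not vertical, so one even obtains a contradiction directly from the order of $\sqrt{\mathrm{dist}}$-type versus linear growth near $p^{*}$). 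The step I expect to require the most care is precisely this application of the boundary tangency principle where the limit leaf $M$ and a catenoid are $C^{1,1}$ hypersurfaces meeting with a common vertical tangent plane along tangent boundaries, together with the matching $C^{1,1}$ regularity of $M$ at $\partial\Omega$.
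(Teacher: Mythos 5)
Your overall architecture (exhaust $\Omega$ by annular domains, extract the finite-gradient leaves, obtain $u_{\pm\infty}$ as monotone limits, and reduce the foliation to monotonicity plus continuity in the parameter) is close in spirit to the paper's, but you parametrize by the height $a$ at infinity and reparametrize by the boundary gradient, whereas the paper parametrizes by the gradient bound $s$ from the start: it maximizes the outer boundary value $t$ over the set $T_{k}$ of heights admitting a solution with $\sup\left\Vert \nabla w_{t}\right\Vert \leq s$, shows via the implicit function theorem that the maximizer saturates the gradient constraint, and via a catenoid barrier plus the interior gradient maximum principle that the maximum is attained on $\partial\Omega$. This matters for your very first step: a fundamental solution with vertical tangent over a ball inscribed in $\Lambda$ gives only a $C^{0}$ barrier (continuity of the solution at $\partial\Omega$), \emph{not} a boundary gradient estimate, so your claimed solvability "by the continuity method" is not justified as written; a non-vertical catenoid barrier exists only for $a$ strictly below $\sigma_{n}\rho$, and it is precisely the regime $a\in[\sigma_{n}\rho,u_{\infty}(\infty)]$ that your construction cannot reach by barriers. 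The paper's device of building the gradient bound into the definition of the approximating family is what circumvents this.

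The most serious gap is your proof of the left inclusion in (\ref{inc}). Assuming $u_{\infty}(\infty)<\sigma_{n}\rho$, you compare $u_{\infty}$ with the catenoid $v_{\rho}$ over the maximal inscribed ball touching $\partial\Omega$ at $p$ and invoke the boundary tangency principle at $(p,0)$, asserting that the two graphs have a "common vertical tangent plane" there. But nothing forces the limit manifold $M$ to be vertical at that particular point $p$: the boundary gradients of $u_{s}$ blow up somewhere on $\partial\Omega$ as $s\to\infty$, not necessarily at the contact point of the maximal inscribed sphere, and if $M$ is non-vertical at $(p,0)$ then $u_{\infty}\leq v_{\rho}$ near $p$ is perfectly consistent ($v_{\rho}$ grows like the square root of the distance to $p$, $u_{\infty}$ at worst linearly) and no tangency occurs. (Your right-inclusion argument does not suffer from this, since there $v_{\varrho}\leq u_{\infty}$ forces the square-root lower bound and hence verticality of $M$ at $p^{*}$.) The paper proves the left inclusion constructively: for each $c\in[0,\sigma_{n}\rho]$ it builds, by Perron's method, a solution $w_{c}$ with $w_{c}|_{\partial\Omega}=0$ and $w_{c}(\infty)=c$ — the inscribed catenoids serve as upper barriers exactly because $c\leq\sigma_{n}\rho=v_{\rho}(\infty)$ — and then identifies $w_{c}$ with a leaf $u_{s_{c}}$ by tangency arguments, so that every $c\leq\sigma_{n}\rho$ is attained as a value at infinity. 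You would need to supply this (or an equivalent) construction. Two further points need real work rather than the sketches you give: the $C^{1,1}$ regularity of $M$ up to $\partial\Omega$ with $\partial M=\partial\Omega$ (your reflection argument presupposes that the limit graph actually attaches to $\partial\Omega$ vertically, which is exactly what must be proved; the paper gets it from the BV minimization of Miranda together with Bourni's boundary regularity theorems), and the continuity of $s\mapsto u_{s}(x)$ used for the covering property, which hides a uniqueness statement for leaves with prescribed boundary gradient that the paper handles through Kuwert's Theorem 2 and repeated tangency arguments.
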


\begin{proof}
We first consider the case that $-\infty<s<\infty$. Since the case $s=0$ is
trivial and, since if $u_{s}$ is a solution satisfying (\ref{bou}), (\ref{gr})
(\ref{cinf}) and (\ref{gauss}) then $u_{-s}=-u_{s}$ is a solution also
satisfying these conditions, we may assume $s>0$. Let $a\in\mathbb{R}$ be such
that $B_{a}=B_{a}\left(  0\right)  $, the open ball in $\mathbb{R}^{n}$ of
radius $a$ centered at origin, contains $\overline{\Lambda}$. Let $v_{a}\in
C^{0}\left(  \mathbb{R}^{n}\backslash B_{a}\right)  $ be given by (\ref{ncat})
with $p=0$. We see that $\left\Vert \nabla v_{a}\left(  x\right)  \right\Vert
\rightarrow0$ as $\left\Vert x\right\Vert \rightarrow\infty$ and we may then
choose $k\in\mathbb{N}$, $k>a+1$, large enough such that
\begin{equation}
\left\Vert \nabla v_{a}\right\Vert _{\partial B_{k}}\leq\frac{s}{2}.
\label{vas}%
\end{equation}

Set $\Omega_{k}=B_{k}\cap\Omega$ and%
\begin{equation}
T_{k}=\left\{  t\geq
0\ ;\begin{aligned} \ &\ \exists~w_{t}\in C^{2,\alpha}\left( \overline{\Omega }_{k}\right) \text{ s.t. }\mathcal{M}\left( w_{t}\right) =0,\\ &~\sup\nolimits_{\overline{\Omega}_{k}}\left\Vert \nabla w_{t}\right\Vert \leq s, \ w_{t}|_{\partial\Omega}=0,~w_{t}|_{\partial B_{k}}=t \end{aligned}\right\}
.\qquad\label{Tk}%
\end{equation}

The set $T_{k}$ is not empty since $0\in T_{k}$. Moreover, $\sup T_{k}<\infty$
since
\[
\sup_{\overline{\Omega}_{k}}\left\Vert \nabla w_{t}\right\Vert \leq s
\]
for all $t\in T_{k}$. We will prove that
\[
t_{k}:=\sup T_{k}\in T_{k}%
\]
and that
\begin{equation}
\sup_{\Omega_k}\left\Vert \nabla w_{t_{k}}\right\Vert =\sup_{\partial\Omega_k
}\left\Vert \nabla w_{t_{k}}\right\Vert =s\text{.} \label{wtk}%
\end{equation}

Taking a sequence $\left(  t_{m}^{k}\right)  $ in $T_{k}$ converging to
$t_{k}$ as $m\rightarrow\infty$ the corresponding functions $w_{t_{m}^{k}}$
have uniformly bounded $C^{1}$ norm. By elliptic PDE theory (\cite{GT},
\cite{RT2}) there is a subsequence of $w_{t_{m}^{k}}$ converging on the
$C^{2}$ norm on $\overline{\Omega}_{k}$ to a function $w_{k}\in C^{2,\alpha
}\left(  \overline{\Omega}_{k}\right)  $ which satisfies $\mathcal{M}\left(
w_{k}\right)  =0$ in $\Omega_{k}$. Clearly $w_{k}|_{\partial\Omega}=0$,
$w_{k}|_{\partial B_{k}}=t_{k}$ and $\sup_{\Omega_{k}}\left\Vert \nabla
w_{k}\right\Vert \leq s$. It follows that $t_{k}\in T_{k}$ and that
$w_{k}=w_{t_{k}}$.

From the maximality of $t_{k}$ we claim that we cannot have $\sup_{\Omega_{k}%
}\left\Vert \nabla w_{k}\right\Vert \nolinebreak<\nolinebreak s.$ Indeed:
Consider a function $\phi\in C^{2,\alpha}\left(  \mathbb{R}^{n}\right)  $ such 
that $\phi|_{B_{k-1}}=0$ and $\phi|_{\mathbb{R}^{n}\backslash B_{k}}=1$, set
\[
C_{0}^{2,\alpha}(\overline{\Omega}_{k})=\left\{  \left.  \omega\in
C^{2,\alpha}(\overline{\Omega}_{k})\text{ }\right\vert \text{ \ \ }%
\omega|_{\partial\Omega_{k}}=0\right\}  ,
\]
and define $T\colon\,[-1,1]\times C_{0}^{2,\alpha}(\overline{\Omega}%
_{k})\rightarrow C^{\alpha}(\overline{\Omega}_{k})$ by
\[
T\left(  t,\omega\right)  =\mathcal{M}\left(  \omega+w_{k}+t\phi\right)  .
\]
Then $T\left(  0,0\right)  =0.$ One may see that the Fr\'{e}chet derivative
$\partial_{2}T\left(  0,\omega_{k}\right)  =d\mathcal{M}_{w_{k}}$ is
invertible (Theorem 3.3 of \cite{GT}) so that, from the implicit function
theorem on Banach spaces (Theorem 17.6 of \cite{GT}), there exists a
continuous function $t\mapsto\omega\left(  t\right)  \in C_{0}^{2,\alpha
}(\overline{\Omega}_{k})$ (continuous on the $C^{2,\alpha}$ topology)$,$ with
$\omega(0)=0$ such that $T\left(  t,\omega(t)\right)  =0,$ $t\in\left(
-\varepsilon,\varepsilon\right)  .$ Therefore, since $\left\Vert
\operatorname{\nabla}w_{k}\right\Vert _{\Omega_k}<s$ there exists
$t\in\left(  0,\varepsilon\right)  $ such that
\[
\sup_{\Omega_k}\left\Vert \nabla\left(  \omega\left(  t\right)
+w_{k}+t\phi\right)  \right\Vert <s.
\]
Since
\[
\mathcal{M}\left(  \omega\left(  t\right)  +w_{k}+t\phi\right)  =T(t,\omega
(t))=0,
\]
$\omega\left(  t\right)  +w_{k}+t\phi=0$ at $\partial\Omega$ and
$\omega(t)+w_{k}+t\phi=t_{k}+t$ at $\partial B_{k},$ it follows that
$t_{k}+t\in T_{k},$ contradiction since $t_{k}=\sup T_{k}.$ We then have
$\sup_{\Omega_{k}}\left\Vert \nabla w_{k}\right\Vert =s$.
We claim that%
\begin{equation}
\sup_{\partial B_{k}}\left\Vert \nabla w_{k}\right\Vert \leq s/2. \label{was}%
\end{equation}

Indeed: Since the graph of $v_{a}$ is vertical at $\partial B_{a}$ it follows
from the comparison principle (see \cite{GT}, Ch 10, or Proposition 3.1 of
\cite{RT2}) that
\begin{equation}
v_{a}+t_{k}-v_{a}(x_{0})\leq w_{k}\leq t_{k} \label{in}%
\end{equation}
where $x_{0}\ $is any but fixed point of $\partial B_{k}.$ From (\ref{vas})
and (\ref{in}) we get (\ref{was}). By the gradient maximum principle
(\cite{GT}, Ch 15) we obtain%
\[
\sup_{\Omega_k}\left\Vert \nabla w_{k}\right\Vert =\sup_{\partial\Omega
_k}\left\Vert \nabla w_{k}\right\Vert =s.
\]

Letting $k\rightarrow\infty$ and using the diagonal method we obtain a
subsequence of $w_{k}$ converging uniformly $C^{2}$ on compact subsets of
$\overline{\Omega}$ to a function $u_{s}\in C^{2,\alpha}\left(  \overline
{\Omega}\right)  $ satisfying $\mathcal{M}\left(  u_{s}\right)  =0$ in
$\Omega,$ (\ref{bou}) and (\ref{gr}). From elliptic PDE regularity \cite{GT}
$u_{s}\in C^{\infty}\left(  \Omega\right)  $.

Now, for any $s\in\left[  0,\infty\right)  ,$ the graph $G_{s}$ of $u_{s}$ is
by construction of (uniform) bounded slope (see \cite{S}). It follows from
Proposition 3 of \cite{S} that $G_{s}$ is \emph{regular at infinity }that is,
$u_{s}$ has a twice differentiable expansion%
\begin{equation}
u_{s}\left(  x\right)  =c_{s}+a_{s}\left\Vert x\right\Vert ^{2-n}+\sum
_{j=1}^{n}c_{s,j}x_{j}\left\Vert x\right\Vert ^{-n}+O\left(  \left\Vert
x\right\Vert ^{-n}\right)  \label{exp}%
\end{equation}
from which it follows that%
\begin{equation}
u_{s}\left(  \infty\right)  :=\lim_{\left\Vert x\right\Vert \rightarrow\infty
}u_{s}\left(  x\right)  =c_{s}. \label{cs}%
\end{equation}

It also follows from (\ref{exp}) that
\[
\lim_{\left\Vert x\right\Vert \rightarrow\infty}\left\Vert \nabla
u_{s}\right\Vert \left(  x\right)  =0
\]
which implies that $G_{s}$ is horizontal at infinity that is, (\ref{gauss}) is
satisfied. This proves that (\ref{cinf}) and (\ref{gauss}) are satisfied for
$s\in\left[  0,\infty\right)  .$

Let $v_{\varrho}$ be the fundamental solution on $\mathbb{R}^{n}\backslash
B_{\varrho}$ which gradient infinity at $\partial B_{\varrho}.$ Given
$s\in\left[  0,\infty\right)  $ we claim that $u_{s}\left(  \infty\right)
<v_{\varrho}\left(  \infty\right)  .$ Indeed, coming from $-\infty$ with the
graph $G_{\varrho}$ of $v_{\varrho}$ using vertical translations$,$ since the
gradient of $v_{\varrho}$ at the boundary of $B_{\varrho}$ is infinity, it
follows from the tangency principle that the first contact between
$G_{\varrho}$ and the graph of $u_{s}$ has to be at infinity and with the
boundary of $G_{\varrho}$ strictly below the level $x_{n+1}=0.$ Hence, at the
level $x_{n+1}=0$ one necessarily has $u_{s}\left(  \infty\right)
<v_{\varrho}\left(  \infty\right)  .$ It follows from the claim and from
(\ref{vlinf}) that $u_{s}$ is bounded by $\sigma\varrho$ for all $s\in\left[
0,\infty\right)  .$

Clearly we have $u_{s}\leq u_{t}$ and also $u_{s}\left(  \infty\right)  \leq
u_{t}\left(  \infty\right)  $ if $s\leq t$. Hence, for any increasing sequence
$s_{m}\rightarrow\infty$ the sequence $u_{s_{m}}$ converges uniformly on
compact subsets of $\Omega$ to a $C^{\infty}$ function $u_{\infty}$ in
$\Omega$ satisfying $\mathcal{M}\left(  u_{\infty}\right)  =0$.

For proving that the graph $G_{\infty}$ of $u_{\infty}$ is contained in a
$C^{1,1}$ manifold with boundary $\partial\Omega$ consider a fixed ball
$B_{a}$ with $a>\varrho.$ By \cite{Mi}, given $s\in\left[  0,\infty\right]  $
there is a minimizer $v_{s}$ on the space $\operatorname*{BV}\left(
\Omega_{a}\right)  $ of bounded variation functions on $\Omega_{a}$ (see
\cite{G})$,$ for the functional
\[
\mathcal{F}_{s}\left(  w\right)  =\int_{\Omega_{a}}\sqrt{1+\left\Vert \nabla
w\right\Vert ^{2}}+\int_{\partial\Omega_{a}}\left\vert w-\phi_{s}\right\vert
,\text{ }w\in\operatorname*{BV}\left(  \Omega_{a}\right)  ,
\]
where $\phi_{s}\in C^{\infty}\left(  \partial\Omega_{a}\right)  $ satisfies
$\phi_{s}|_{\partial\Omega}=0$, $\phi_{s}|_{\partial B_{a}}=u_{s}|_{\partial
B_{a}}.$ Since $u_{s}$ is also a minimizer for $\mathcal{F}_{s}$ for $0\leq
s<\infty,$ we have $u_{s}|_{\Omega_{a}}=v_{s}$ by uniqueness \cite{Mi} (the
equality is in\ $\operatorname*{BV}\left(  \Omega_{a}\right)  $). Noting that
\begin{align*}
\lim_{s\rightarrow\infty}\mathcal{F}_{s}\left(  w\right)   &  =\mathcal{F}%
_{\infty}\left(  w\right)  ,\text{ }w\in\operatorname*{BV}\left(  \Omega
_{a}\right)  ,\\
\lim_{s\rightarrow\infty}\mathcal{F}_{s}\left(  u_{s}\right)   &
=\lim_{s\rightarrow\infty}\mathcal{F}_{\infty}\left(  u_{s}\right)  ,
\end{align*}
we have (writing only $u_{s}$ instead of $u_{s}|_{\Omega_{a}})$%
\begin{align*}
\mathcal{F}_{\infty}\left(  v_{\infty}\right)   &  =\lim_{s\rightarrow\infty
}\mathcal{F}_{s}\left(  v_{\infty}\right)  \geq\lim_{s\rightarrow\infty
}\mathcal{F}_{s}\left(  v_{s}\right)  =\lim_{s\rightarrow\infty}%
\mathcal{F}_{s}\left(  u_{s}\right) \\
&  =\lim_{s\rightarrow\infty}\mathcal{F}_{\infty}\left(  u_{s}\right)
\geq\mathcal{F}_{\infty}\left(  u_{\infty}\right)  ,
\end{align*}
where, in the last inequality, we used that $\mathcal{F}_{\infty}$ is lower
semicontinuous. It follows that $\mathcal{F}_{\infty}\left(  v_{\infty
}\right)  =\mathcal{F}_{\infty}\left(  u_{\infty}\right)  $ and hence, by
uniqueness, $v_{\infty}=u_{\infty}$ in $\Omega_{a}$. From Theorem 4.2 of
\cite{Bo} applied to the functional $\mathcal{F}_{\infty}$, by choosing
$\Phi=\partial\Omega$, $\phi_{i}\equiv0$, and using also Theorem 4.7, we
conclude that the graph of $u_{\infty}$ is contained in a $C^{1,1}$ manifold
$M$ with boundary which boundary is $\partial\Omega.$

We have seen that $s\rightarrow u_{s}\left(  \infty\right)  $ is increasing
and bounded by $\sigma\varrho.$ If $c:=\lim_{s\rightarrow\infty}u_{s}\left(
\infty\right)  $ then we have $u_{s}\leq u_{\infty}\leq c,$ $s\in\left[
0,\infty\right)  ,$ by the comparison principle, and hence there is the limit
$u_{\infty}\left(  \infty\right)  $ of $u_{\infty}\left(  x\right)  $ as
$\left\Vert x\right\Vert \rightarrow\infty$ and $u_{\infty}\left(
\infty\right)  =c,$ proving the second inclusion of (\ref{inc}). We shall
prove now (\ref{gauss}) for $s=\infty.$

By the way $u_{\infty}$ is obtained we can not conclude directly that the
graph of $u_{\infty}$ is of (uniform) bounded slope and hence we don't know if
$u_{\infty}$ is regular at infinity and admits an expansion as (\ref{exp}).
But this is actually the case, indeed:\ Since $u_{\infty}\left(
\infty\right)  =c$ the tangent cone to the graph of $u_{\infty}$ at infinity
is the hyperplane $\mathbb{R}^{n}=\left\{  x_{n+1}=0\right\}  $ of
$\mathbb{R}^{n+1}$ (see \cite{Si}) and hence, from Theorem 1 of \cite{Si} it
follows that $\nabla u_{\infty}$ has a limit at infinity and $\left\Vert
\nabla u_{\infty}\right\Vert $ is bounded outside some compact. Since $u_{\infty
}$ is bounded this limit has to be zero and this proves (\ref{gauss}) for
$s=\infty.$

Let $c\in\lbrack0,\sigma_{n}\rho]$ be given. We prove that there is a non
negative solution $w_{c}\in C^{0}\left(  \overline{\Omega}\right)  \cap
C^{\infty}\left(  \Omega\right)  $ of (\ref{exDP}) such that $w_{c}%
|_{\partial\Omega}=0$ and%
\[
\underset{\left\Vert x\right\Vert \rightarrow\infty}{\lim}w_{c}\left(
x\right)  =c.
\]

Define
\begin{equation}
\digamma=\left\{  f\in C^{0}\left(  \overline{\Omega}\right)
;\begin{aligned} \ &~f\text{ is a subsolution of }\mathcal{M}\text{ in }\Omega,\\ &~f=0\text{ in }\partial\Omega\text{ and }\limsup \nolimits_{\left\Vert x\right\Vert \rightarrow\infty}f\left( x\right) \leq c \end{aligned}\right\}
.\qquad\label{per}%
\end{equation}

Clearly $\digamma\neq\varnothing$ and it follows from the the comparison
principle that $f\leq c$ for all $f\in\digamma.$ We may then apply Perron's
method (\cite{GT}, Section 2.8) to conclude that
\[
w_{c}\left(  x\right)  =\sup\left\{  f\left(  x\right)  ;\text{ }f\in
\digamma\right\}  \text{, }x\in\overline{\Omega},
\]
is $C^{\infty}$ and satisfies $\mathcal{M}\left(  w_{c}\right)  =0$ in
$\Omega$. For proving that
\begin{equation}
\lim_{\left\Vert x\right\Vert \rightarrow\infty}w_{c}\left(  x\right)  =c
\label{wc}%
\end{equation}
take $a>0$ large enough, such that $\overline{\Lambda}\subset B_{a}$ satisfies
$v_{a}\left(  \infty\right)  >c$. We have that $f\in C^{0}\left(
\overline{\Omega}\right)  $ given by%
\[
f\left(  x\right)  =\left\{
\begin{array}
[c]{l}%
0\text{ if }x\in\overline{\Omega}\cap B_{a}\\
\max\{0,v_{a}\left(  x\right)  -\left(  v_{a}\left(  \infty\right)  -c\right)
\}\text{, if }x\in\mathbb{R}^{n}\backslash B_{a}%
\end{array}
\right.
\]
is a subsolution relatively to the (\ref{exDP}) satisfying $f|_{\partial
\Omega}=0$ and
\begin{equation}
\underset{\left\Vert x\right\Vert \rightarrow\infty}{\lim}f\left(  x\right)
=c. \label{win}%
\end{equation}
It follows that $f\in\digamma$ and then $f\leq w_{c}\leq c,$ which proves
(\ref{wc}).

It remains to prove that $w_{c}$ extends $C^{0}$ to $\overline{\Omega}$ and
that $w_{c}|_{\partial\Omega}=0$. Given $p\in\partial\Omega$, by hypothesis
there is an open ball $B_{\rho}$ contained in $\Lambda$ such that $\partial
B_{\rho}$ is tangent to $\partial\Omega$ ($=\partial\Lambda)$ at $p$. Since
\[
c\leq\sigma_{n}\rho=v_{\rho}\left(  \infty\right)
\]
and $v_{\rho}=0$ at $\partial B_{p}$ it follows from the comparison
principle\ that $0\leq w_{c}\leq v_{\rho}$. Since $p$ is arbitrary this proves
the claim that is, $w_{c}$ extends $C^{0}$ to $\overline{\Omega}$ and
$w_{c}|_{\partial\Omega}=0$.

Now, assume that $0\leq c<\sigma_{n}\rho.$ Then we may find a fundamental
solution $\widetilde{v}$ defined on the exterior of a ball of radius $\rho,$
contained in $\Lambda$, tangent to $\partial\Omega$ with bounded gradient at
the boundary of the ball and such that
\[
\widetilde{v}\left(  \infty\right)  =\frac{c+\sigma_{n}\rho}{2}.
\]

By the comparison principle it follows that $0\leq w_{c}\leq\widetilde{v}.$
This proves that $w_{c}$ extends $C^{1}$ to $\overline{\Omega}$ and, by PDE
regularity \cite{GT}, $w_{c}\nolinebreak\in\nolinebreak C^{2,\alpha}\left(
\overline{\Omega}\right)  \nolinebreak\cap\nolinebreak C^{\infty}\left(
\Omega\right)  .$ Setting%
\[
s_{c}=\max_{\partial\Omega}\left\Vert \nabla w_{c}\right\Vert ,
\]
we prove that $u_{s_{c}}=w_{c}.$ By contradiction, assume the opposite. Then,
setting%
\begin{equation}
d:=\lim_{\left\Vert x\right\Vert \rightarrow\infty}u_{s_{c}} \label{hav}%
\end{equation}
we cannot have $d>c$ or $d<c$. Indeed: Assume, by contradiction, that $d>c.$
Let $p\in\partial\Omega$ be such that $\left\Vert \nabla w_{c}\right\Vert
\left(  p\right)  =s_{c}.$ If $\left\Vert \nabla u_{s_{c}}\right\Vert \left(
p\right)  =s_{c}$ we cannot have $w_{c}\left(  x\right)  \leq u_{s_{c}}\left(
x\right)  $ for all $x\in\overline{\Omega}$ because of the boundary tangency
principle. But if have $w_{c}>u_{s_{c}}$ this inequality must hold only on a
bounded open subset of $\Omega$ since $c<d$. One can then make a vertical
translation of the graph of one of the solutions to get a tangency between
their graphs, with one of them in one side of the other, contradicting the
tangency principle.

The remaining possibility%
\[
\left\Vert \nabla u_{s_{c}}\right\Vert \left(  p\right)  <s_{c}=\left\Vert
\nabla w_{c}\right\Vert \left(  p\right)
\]
also implies that $w_{c}>u_{s_{c}}$ must hold on a bounded open subset of
$\Omega$ leading, as before, to a contradiction with the tangency principle.
The case that $d<c$ cannot happen by the same arguments. This proves that
$c=d$ and, arguing with the tangency principle again, that $w_{c}=u_{s_{c}}.$

Finally, take an increase sequence $c_{m}\in\left[  0,\sigma_{n}\rho\right)  $
converging to\ $\sigma_{n}\rho$ as $m\rightarrow\infty$. The sequence
$s_{c_{m}}$ is increasing and then has a limit $s\in\left[  0,\infty\right]
.$ The sequence $\left(  u_{s_{c_{m}}}\right)  $ converges uniformly $C^{2}$
on compact subsets of $\Omega$ to a solution $u_{s}\in C^{0}\left(
\overline{\Omega}\right)  \cap C^{\infty}\left(  \Omega\right)  ,$
$u_{s}|_{\partial\Omega}=0$ and $\sup_{\partial\Omega}\left\Vert \nabla
u_{s}\right\Vert =s.$ As before we obtain $u_{s}=w_{\sigma_{n}\rho}$, proving
that
\[
\left[  0,\sigma_{n}\rho\right]  \subset\left[  0,u_{\infty}\left(
\infty\right)  \right]  .
\]
This concludes the proof of (\ref{inc}).

If one of the inclusions in (\ref{inc}) is an equality and the corresponding
graphs of the solutions with infinite gradient at $\partial\Omega$ are not the
same, then either one is below the other or they intersect in interior points.
The first case cannot occur because of the boundary tangency principle. The
second case neither because otherwise one can make a vertical translation of
one of them to get a tangency between the graphs, with one in one side of the
other, contradicting the tangency principle. Hence, in case of equality in
some of the inclusions (\ref{inc}), $\Omega$ is the exterior of a ball of
radius $\rho=\varrho.$ Is a particular consequence of the proof of the
foliation property, given below, that the solutions $u_{s}$ are necessarily
the fundamental solutions.

For proving that the graphs of the solutions $u_{s},$ $s\in\left(
-\infty,\infty\right)  ,$ foliate the open subset $O$ of $\mathbb{R}^{n}$
(defined in (\ref{O})) we apply Theorem 2 of \cite{K}. It is enough to prove
that any solution $u\in C^{0}\left(  \overline{\Omega}\right)  $ of the
minimal surface equation in $\Omega$ with horizontal end and such that
$u|_{\partial\Omega}=0$ coincides with $u_{s}$ for some $s\in\left[
-\infty,\infty\right]  .$

By using Theorems 4.2 and 4.7 of \cite{Bo}, as above, we may conclude that the
graph of $u$ is a $C^{1,1}$ manifold $M$ with boundary and, since $u\in
C^{0}\left(  \overline{\Omega}\right)  $ is a solution of the minimal surface
equation in $\Omega$, $M$ is a minimal hypersurface with boundary
$\partial\Omega$ of $\mathbb{R}^{n}$. Representing $M,$ locally, as a graph
near any given point of $\partial\Omega\ (=\partial M),$ we may use PDE
regularity theory to conclude that, indeed, $M$ is a $C^{2,\alpha}$ manifold.
Moreover, the assumption that $u$ has horizontal end implies, as already
argued before, that $u$ is bounded and that there exists the limit
\[
d:=\lim_{\left\Vert x\right\Vert \rightarrow\infty}u\left(  x\right)  .
\]

If $M$ has no vertical tangent space at any point of $\partial\Omega$ then it
follows by PDE regularity that $u\in C^{2,\alpha}\left(  \overline{\Omega
}\right)  \cap C^{\infty}\left(  \Omega\right)  $. Setting $s=\max
_{\partial\Omega}\left\Vert \nabla u\right\Vert ,$ we can argue as before to
prove that $u=u_{s}.$

Assume that $M$ has a vertical tangent space at some point of $\partial
\Omega.$ We claim then that $u=u_{\infty}$ or $u=u_{-\infty}.$ We first prove
that $d=u_{\infty}\left(  \infty\right)  $ or $d=u_{-\infty}\left(
\infty\right)  $. By contradiction, first assume that $0<u_{\infty}\left(
\infty\right)  <d .$

Arguing with the tangency principle it is easy to see then that $u_{\infty
}\leq u.$ But then $u_{\infty}\in C^{0}\left(  \overline{\Omega}\right)  $ and
the graph $G$ of $u_{\infty}$ is a minimal hypersuface of $C^{2,\alpha}$ class
with boundary $\partial\Omega$ which has a vertical tangent space at some
point $p\in\partial\Omega.$ The hypersurfaces $G$ and $M$ then must have a
tangency at $p.$ By the boundary tangency principle it follows that $G=M,$ contradiction!

If $0\leq d<u_{\infty}\left(  \infty\right)  $, since $u_{s}$ converges
uniformly on compacts of $\Omega$ to $u_{\infty},$ as $s\rightarrow\infty,$
there is $s$ large enough such that $u_{s}\left(  \infty\right)  >d$. By using
the tangency principle one may see that this leads to a contradiction. For
similar reasons one excludes the case $d<u_{-\infty}\left(  \infty\right)  $
and $u_{-\infty}\left(  \infty\right)  <d\leq0.$

It then follows that $d=u_{\infty}\left(  \infty\right)  $ or $d=u_{-\infty
}\left(  \infty\right)  $ from what one easily obtains, from the tangency
principle once more, that $u=u_{\infty}$ or $u=u_{-\infty}.$ This concludes
with the proof of the theorem.
\end{proof}

\noindent\textbf{Remarks.}

\noindent(a) It is true that the graph of the limit solution $u_{\infty}$ of
the EDP in $\mathbb{R}^{2}$ is a $C^{1,\alpha}$ surface with boundary.
Moreover, it holds $u_{\infty}\in C^{0}\left(  \overline{\Omega}\right)  $ in
this case \cite{RT}. In higher dimensions, as proved in the Theorem \ref{mt},
the graph of the solution $u_{\infty}$ is part of a $C^{1,1}$ manifold with
boundary $\partial\Omega.$ However, we do not know if $u_{\infty}\in
C^{0}\left(  \overline{\Omega}\right)  $. The $2-$dimensional case is studied
in \cite{RT} using classical Plateau's problem technique which is typically
$2-$dimensional.

\begin{figure}[h]
\centering
\includegraphics[width=1\linewidth]{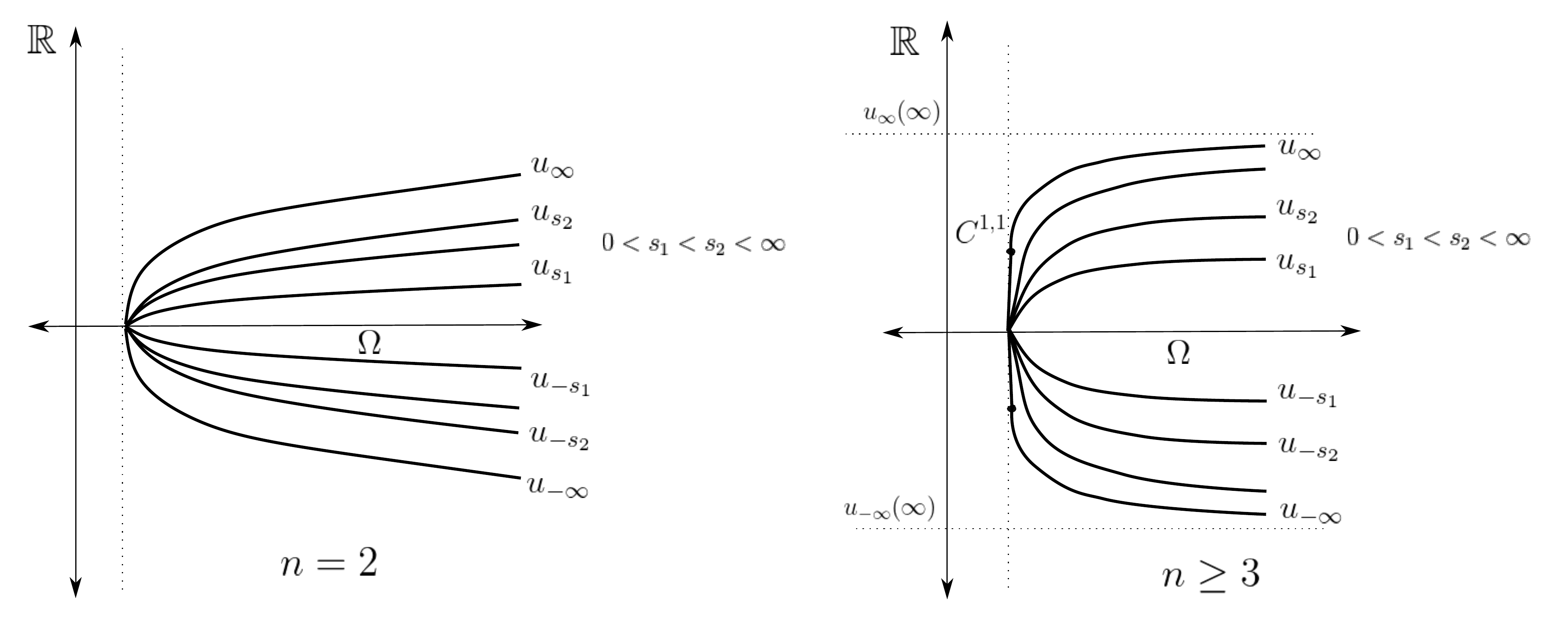}
\caption*{Possible solutions of arbitrary domains }
\label{fig:fig2}%
\end{figure}


\noindent(b) The EDP for the minimal surface equation is studied in the
Riemannian setting in \cite{ARS} and \cite{ER}.

\bigskip

\medskip

\noindent Ari Aiolfi \newline\noindent Universidade Federal de Santa
Maria\newline\noindent ari.aiolfi@ufsm.br \newline\noindent Brazil \newline

\medskip

\noindent Daniel Bustos \newline\noindent Universidad del Tolima
\newline\noindent dfbustosr@ut.edu.co \newline\noindent Colombia \newline

\medskip

\noindent Jaime Ripoll\newline\noindent Universidade Federal do Rio Grande do
Sul\newline\noindent jaime.ripoll@ufrgs.br\newline\noindent Brazil

\end{document}